\DeclareMathAlphabet\mathbb{U}{msb}{m}{n}
\def\ZZ{\mathbb{Z}}
\numberwithin{equation}{section}
\newtheorem{Theorem}{Theorem}[section]
\newtheorem{Corollary}[Theorem]{Corollary}
\newtheorem{Lemma}[Theorem]{Lemma}
\newtheorem{Proposition}[Theorem]{Proposition}
\theoremstyle{definition}
\newtheorem{Remark}[Theorem]{Remark}
\def\gg{\mathfrak{g}}
\def\FF{\mathcal{F}}
\def\GG{\mathcal{G}}
\def\HH{\mathcal{H}}
\begin{document}

\title[Curtis' connectivity theorem]{A Simple proof of Curtis' connectivity theorem \\ for Lie powers}

\author{Sergei O. Ivanov}
\address{
Laboratory of Modern Algebra and Applications,  St. Petersburg State University, 14th Line, 29b,
Saint Petersburg, 199178 Russia}\email{ivanov.s.o.1986@gmail.com}

\author{Vladislav Romanovskii}
\address{
Laboratory of Modern Algebra and Applications,  St. Petersburg State University, 14th Line, 29b,
Saint Petersburg, 199178 Russia}
\email{Romanovskiy.vladislav.00@mail.ru}

\author{Andrei Semenov}
\address{
Chebyshev Laboratory,  St. Petersburg State University, 14th Line, 29b,
Saint Petersburg, 199178 Russia}
\email{asemenov.spb.56@gmail.com}

\thanks{
The work is supported by a grant of the Government of the Russian Federation for
the state support of scientific research, agreement 14.W03.31.0030 dated 15.02.2018. The third author was also supported by ``Native Towns'', a social investment program of PJSC ``Gazprom Neft''.}

\maketitle
\begin{abstract}
We give a simple proof of the Curtis'  theorem: if $A_\bullet$ is $k$-connected free simplicial abelian group, then $L^n(A_\bullet)$ is an $k+ \lceil \log_2 n \rceil$-connected simplicial abelian group, where $L^n$ is the functor of $n$-th Lie power. In the proof we do not use Curtis' decomposition of Lie powers. Instead of this we use the Chevalley-Eilenberg complex for the free Lie algebra.
\end{abstract}

\section{\bf Introduction}

In \cite{Curtis'65}  Curtis constructed a spectral sequence that converges to the homotopy groups $\pi_*(X)$ of a simply connected space $X.$ It was described in the language of simplicial groups. This spectral sequence was an early version of the unstable Adams spectral sequence (see \cite[\S 9]{Curtis'71}, \cite{6}). Recall that a simplicial group $G_\bullet$ is called $n$-connected if $\pi_i(G_\bullet)=0$ for $i\leq n.$ For a group $G$ we denote by $\gamma_n(G)$ the $n$-th term of its lower central series. In order to prove the convergence of this spectral sequence, Curtis proved a theorem, that we call ``Curtis' connectivity theorem for   lower central series''. It can be formulated as follows. 

\medskip

\noindent {\bf Theorem} (\cite{Curtis'65}). {\it If $G_\bullet$ is a $k$-connected free simplicial group for $k\geq 0$, then the simplicial group $\gamma_n (G_\bullet)$ is $k+ \lceil \log_2 n \rceil$-connected.}

\medskip 

Curtis gave a tricky proof of this theorem using some delicate calculations with generators in free groups. Later Rector \cite{Rector} described a mod-$p$ analogue of this spectral sequence where the lower central series is replaced by the mod-$p$ lower central series. Then Quillen \cite{Quillen} found a more conceptual way to prove the connectivity theorem for the mod-$p$ lower central series using simplicial profinite groups. This result was enough to prove the convergence of the mod-$p$ version of the spectral sequence. Quillen reduced this connectivity theorem to an earlier result of Curtis, which we call ``Curtis' connectivity theorem for Lie powers''. Denote by $L^n:{\sf Ab}\to {\sf Ab}$ the functor of $n$-th Lie power.  Then the theorem can be formulated as follows. 

\medskip

\noindent {\bf Theorem} (\cite{Curtis'63}). {\it If $A_\bullet$ is a $k$-connected free simplicial abelian group, then the simplicial abelian group $L^n(A_\bullet)$ is $k+ \lceil \log_2 n \rceil$-connected.}  

\medskip

The Curtis' proof of this theorem is quite complicated and takes up most of the paper (see \cite[\S 4 -- \S 7]{Curtis'63}). He used so-called ``decomposition of Lie powers'' into smaller functors. The decomposition is a kind of filtration on the functor $L^n$ (see \cite[\S 4]{Curtis'63}).  The goal of this paper is to give a simpler proof of this theorem without the decomposition. Instead of this we use the Chevalley-Eilenberg complex for the free Lie algebra. We also generalize the statement to the case of modules over arbitrary commutative ring.     

Let $R$ be a commutative ring. We say that a functor $\FF:{\sf Mod}(R)\to {\sf Mod}(R)$ is  {\it $n$-connected} if for any $k\geq 0$ and any $k$-connected free simplicial module $A_\bullet$ the simplicial module $\FF(A_\bullet)$ is $k+n$-connected.  In these terms we prove the following.

\medskip

\noindent {\bf Theorem.} {\it The functor of Lie power $L^n:{\sf Mod}(R)\to {\sf Mod}(R)$ is $\lceil \log_2 n \rceil$-connected.} 

\medskip

We also note that this estimation of connectivity of $L^n$ is the best possible for $n=2^m.$ This is an easy corollary of the description of homotopy groups of $2$-restricted Lie powers on the language of lambda-algebra given in \cite{Curtis'71} and \cite{6} (see also \cite{Mikhailov}, \cite{Leibowitz}).   

\medskip

\noindent {\bf Proposition.} {\it If $R=\ZZ$ or $R=\ZZ/2$ the functor  $L^{2^n}:{\sf Mod}(R)\to {\sf Mod}(R)$ is not $n+1$-connected.}

\medskip 

\noindent Note that our proof of the main theorem is quite elementary. However, the proposition is a corollary of some non-elementary results about the lambda-algebra. 

Assume that $\gg$ is a Lie algebra which is free as a module over the ground commutative ring $R$. By the Chevalley-Eilenberg complex of $\gg$ we mean the chain complex whose components are exterior powers $\Lambda^i \gg$  and whose homology is homology of the Lie algebra with trivial coefficients $H_*(\gg).$ We consider the free Lie algebra as a functor from the category of free modules to the category of Lie algebras. The free Lie algebra has a natural grading whose components are Lie powers $L^*(A)=\bigoplus_{n\geq 1} L^n(A)$. Here we treat Lie powers as functors from the category of free modules $L^n:{\sf FMod}(R)\to {\sf Mod}(R).$ The grading on the free Lie algebra induces a grading on the Chevalley-Eilenberg complex whose components give exact sequences of functors on the category of free modules:
\begin{equation*}
\begin{split} 
 0 \to \Lambda^2 \to L^2 \to 0,&  \\
0\to \Lambda^3 \to {\rm Id}\otimes L^2 \to L^3 \to 0,&  \\
0 \to \Lambda^4 \to  \Lambda^2 \otimes L^2  \to ({\rm Id}\otimes L^3) \oplus \Lambda^2 L^2 \to L^4 \to 0, & \\ \dots \\
0 \to \Lambda^n \to \dots \to \bigoplus_{
\substack
{
k_1+\dots+k_n=i \\
k_1\cdot 1+k_2 \cdot 2 + \dots+k_n\cdot n=n
}
}
\ \Lambda^{k_1} L^1 \otimes \Lambda^{k_2} L^2 \otimes \dots \otimes \Lambda^{k_n} L^n \to \dots \to L^n \to 0
\end{split}
\end{equation*}
(see Corollary \ref{cor_C}), where $\Lambda^{k_s}L^s$ denotes the composition of the functor of Lie power and the functor of exterior power. We use these complexes for induction in the proof of the main result. 

\section{\bf Graded Chevalley-Eilenberg complex}

Throughout the paper  $R$ denotes a commutative ring. All algebras, modules, simplicial modules, tensor products and exterior powers are assumed to be over $R.$ 

Let $\gg$ be a Lie algebra which is free as a module. If we tensor the Chevalley-Eilenberg resolution $V_\bullet(\gg)$ (see \cite[XIII \S 7-8]{CartanEilenberg}) on the trivial module $R,$ we obtain a complex 
$C_\bullet(\gg)\cong R\otimes_{U\gg}V_\bullet(\gg)$ that we call the Chevalley-Eilenberg complex. Its components are exterior powers of the Lie algebra  $C_i(\gg)=\Lambda^i \gg$ and the differential is given by the formula
$$d(x_1\wedge \dots \wedge x_i )= \sum_{s<t} (-1)^{s+t} [x_s,x_t]\wedge x_1 \wedge \dots \wedge \hat x_s \wedge \dots \wedge \hat x_t \wedge \dots \wedge x_i.$$  
The homology of this complex is isomorphic to the homology of the Lie algebra $\gg$ with trivial coefficients
$$H_i(\gg,R)=H_i(C_\bullet(\gg)).$$

Let $\gg$ be a graded Lie algebra $\gg=\bigoplus_{n\geq 1} \gg_n.$ By a graded Lie algebra we mean a usual Lie algebra (not a Lie superalgebra) $\gg$ together with a decomposition into direct sum of modules $\gg=\bigoplus_{n\geq 1} \gg_n$ such that $[\gg_n,\gg_m]\subseteq \gg_{n+m}$ for all $n,m\geq 1.$ The degree of a homogeneous element $x\in \gg_n$ is denoted by $|x|=n.$ 

For $n\geq 1$ we consider a submodule  $C_i^{(n)}( \gg )$  of $C_i(\gg)$ spanned by elements $x_1\wedge \dots \wedge x_i,$ where $x_1,\dots,x_i$ are homogeneous and $|x_1|+\dots +|x_i|=n.$
$$C_i^{(n)}(\gg)={\rm span} \{ x_1\wedge \dots \wedge x_i \in \Lambda^i\gg\ \mid\ |x_1|+\dots +|x_i|=n \}.$$
It is easy to see that $d(C_i^{(n)}(\gg))\subseteq C_{i-1}^{(n)}(\gg),$ and hence we obtain a subcomplex $C_\bullet^{(n)}(\gg) $ of $C_\bullet(\gg).$

\begin{Proposition}\label{prop_CE}
Let $\gg=\bigoplus_{n\geq 1} \gg_n$ be a graded Lie algebra, where $\gg_n$ is free as module for each $n$. Then the Chevalley-Eilenberg complex $C_\bullet(\gg)$ has a natural grading 
$$C_\bullet(\gg)=\bigoplus_{n\geq 1} C_{\bullet}^{(n)}(\gg),$$
and there is a natural isomorphism 
$$C_i^{(n)}(\gg)\cong \bigoplus_{
\substack
{
k_1+\dots+k_n=i \\
k_1\cdot 1+k_2 \cdot 2 + \dots+k_n\cdot n=n
}
}
\ \Lambda^{k_1} \gg_1 \otimes \Lambda^{k_2} \gg_2 \otimes \dots \otimes \Lambda^{k_n} \gg_n.
$$
Here the sum runs over the set of ordered $n$-tuples of non-negative integers $(k_1,\dots,k_n)$ such that  $k_1+\dots+k_n=i$ and $k_1\cdot 1+k_2 \cdot 2 + \dots+k_n\cdot n=n.$
\end{Proposition}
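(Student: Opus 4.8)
The plan is to deduce both assertions from the standard natural decomposition of an exterior power of a direct sum. Recall that for any two $R$-modules $M$ and $N$ the wedge multiplication induces a natural isomorphism
$$\Lambda^i(M\oplus N)\ \cong\ \bigoplus_{p+q=i}\Lambda^p M\otimes \Lambda^q N,$$
sending $(m_1\wedge\dots\wedge m_p)\otimes(n_1\wedge\dots\wedge n_q)$ to $m_1\wedge\dots\wedge m_p\wedge n_1\wedge\dots\wedge n_q$, where $M$ and $N$ are regarded as submodules of $M\oplus N$. Iterating this identity, and using that for a fixed $i$ only finitely many of the summands $\gg_m$ of $\gg=\bigoplus_{m\geq 1}\gg_m$ ever contribute (so that $\Lambda^i$ of the infinite direct sum decomposes as a sum over finitely supported tuples), I obtain a natural isomorphism
$$\Lambda^i\gg\ \cong\ \bigoplus_{k_1+k_2+\dots=i}\Lambda^{k_1}\gg_1\otimes\Lambda^{k_2}\gg_2\otimes\dots,$$
the sum being taken over finitely supported tuples $(k_m)_{m\geq 1}$ of non-negative integers with $\sum_m k_m=i$.

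Next I would track degrees. Under this isomorphism the summand indexed by $(k_m)$ is spanned by wedges $x_1\wedge\dots\wedge x_i$ in which exactly $k_m$ of the homogeneous factors lie in $\gg_m$; hence every element of this summand has total degree $\sum_m m\,k_m$. Thus the summand lands in $C_i^{(n)}(\gg)$ precisely when $\sum_m m\,k_m=n$, and collecting summands by their total degree yields the grading $\Lambda^i\gg=\bigoplus_{n\geq 1}C_i^{(n)}(\gg)$ together with the isomorphism restricted to each degree. Finally, the constraint $\sum_m m\,k_m=n$ together with $m\geq 1$ forces $k_m=0$ for every $m>n$, so the tuple is determined by its first $n$ entries $(k_1,\dots,k_n)$; this truncates the indexing set to exactly
$$\begin{substack}{\textstyle k_1+\dots+k_n=i\\ \textstyle k_1\cdot 1+k_2\cdot 2+\dots+k_n\cdot n=n}\end{substack}$$
and gives the displayed formula for $C_i^{(n)}(\gg)$.

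It remains to note that this module grading is a grading of chain complexes, i.e.\ that the differential is homogeneous of degree $0$ in $n$; this is the inclusion $d(C_i^{(n)}(\gg))\subseteq C_{i-1}^{(n)}(\gg)$ already recorded before the proposition, and it follows from $[\gg_a,\gg_b]\subseteq\gg_{a+b}$, since each summand $(-1)^{s+t}[x_s,x_t]\wedge x_1\wedge\dots\wedge\hat x_s\wedge\dots\wedge\hat x_t\wedge\dots\wedge x_i$ of $d(x_1\wedge\dots\wedge x_i)$ again has total degree $\sum_s|x_s|$. Naturality of the grading and of the isomorphism is inherited from the naturality of the exterior-power decomposition, which is functorial in each $\gg_m$ separately.

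The computation is essentially routine, and I do not expect a genuine obstacle. The only points requiring care are the simultaneous bookkeeping of the two constraints on the tuples $(k_1,k_2,\dots)$ and the justification that the exterior power of the infinite direct sum decomposes as claimed, which is where I use that only finitely many $\gg_m$ contribute for fixed $i$. Note that freeness of the $\gg_m$ is not needed for this module-level decomposition; it is used elsewhere only to identify $H_*(C_\bullet(\gg))$ with the Lie algebra homology.
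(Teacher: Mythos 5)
Your proof is correct and follows essentially the same route as the paper's: decompose $\Lambda^i$ of the (infinite) direct sum into tensor products of exterior powers of the graded pieces, then sort the summands by total degree $\sum_m m\,k_m$ to identify $C_i^{(n)}(\gg)$. Your extra remarks (the truncation to $m\leq n$, the compatibility of the differential with the grading via $[\gg_a,\gg_b]\subseteq\gg_{a+b}$, and the observation that freeness is not needed for the module-level decomposition) are all accurate and only make explicit what the paper leaves implicit.
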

\begin{proof} 
For any  modules $A,B$ there is an isomorphism $\Lambda^i(A\oplus B)\cong \bigoplus_{k+l=i} \Lambda^k(A)\otimes \Lambda^l(B).$ By induction we obtain the isomorphism 
$$\Lambda^i \left( {\bigoplus}_{s=1}^N\: A_s \right)\cong \bigoplus_{k_1+\dots +k_N=i } \Lambda^{k_1}A_1 \otimes \dots \otimes  \Lambda^{k_N} A_N.$$ 
Using the fact that the exterior power commutes with direct limits, we obtain the isomorphism for any infinite sequence of modules $A_1,A_2,\dots$
$$\Lambda^i \left( {\bigoplus}_{s=1}^\infty \: A_i \right)\cong \bigoplus_{k_1+k_2 +\dots\ =i } \Lambda^{k_1}A_1 \otimes \Lambda^{k_2} A_2 \otimes \dots.$$ 
Here we consider only sequences of non-negative integers $k_1,k_2,\dots$ where there only finitely many non-zero elements, and hence, each summand in the sum is a finite tensor product.

Take $A_n=\gg_n.$ If we have an element $x_1\wedge \dots \wedge x_i $ with homogeneous  $x_s\in \gg$ from the $R$-submodule corresponding to a summand $\Lambda^{k_1}\gg_1 \otimes \Lambda^{k_2} \gg_2 \otimes \dots,$ then $|x_1|+\dots +|x_n|=k_1\cdot 1+k_2\cdot 2+\dots.$ The assertion follows.   
\end{proof}

Let $A$ be a free module. We denote by $L^*(A)$ the free Lie algebra generated by $A$  For any basis $(a_s)$ of $A,$ $L^*(A)$ is isomorphic to the free Lie algebra generated by the family $(a_s).$ The Lie algebra $L^*(A)$ is free as a module (see \cite{Shirshov}, \cite[Cor. 0.10]{Reutenauer}). Its enveloping algebra is the tensor algebra $T^*(A)$. The map $L^*(A)\to T^*(A)$ is injective \cite[Cor. 0.3]{Reutenauer}. Hence, $L^*(A)$ can be described in terms of tensor algebra. Consider the tensor algebra $T^*(A)$ as a Lie algebra with respect to the commutator. Then $L^*(A)$ can be described as the Lie subalgebra of $T^*(A)$ generated by $A$ (see also \cite[\S 7.4]{Curtis'71}). 

The Lie algebra $L^*(A)$ has a natural grading 
$$L^*(A)=\bigoplus_{n=1}^\infty L^n(A),$$
where $L^n(A)$ is generated by $n$-fold commutators. Equivalently $L^n(A)$ can be described using the embedding into the tensor algebra as $L^n(A)=L(A)\cap T^n(A).$ The homology of $L^*(A)$  the free Lie algebra can be described as follows $H_i(L^*(A))=0$ for $i>1$ and $H_1(L^*(A))=A.$ For simplicity we set 
$${\sf C}_\bullet^{(n)}(A):=  C_\bullet^{(n)}(L^*(A)).$$
All these constructions are natural by $A.$ Denote by $L^n$ the functor of $n$-th Lie power from the category of free modules to the category of modules
$$L^n : {\sf FMod}(R)\longrightarrow {\sf Mod}(R).$$
Moreover, we treat ${\sf C}_\bullet^{(n)}$ as a complex in the category of functors ${\sf FMod}(R)\to {\sf Mod}(R).$
Then Proposition \ref{prop_CE} implies the following corollary. 

\begin{Corollary} \label{cor_C} For  $n\geq 2$ the complex ${\sf C}_\bullet^{(n)}$ of functors ${\sf FMod}(R)\to {\sf Mod}(R)$ is acyclic and has the following components 
$${\sf C}_i^{(n)}=\bigoplus_{
\substack
{
k_1+\dots+k_n=i \\
k_1\cdot 1+k_2 \cdot 2 + \dots+k_n\cdot n=n
}
}
\ \Lambda^{k_1} L^1 \otimes \Lambda^{k_2} L^2 \otimes \dots \otimes \Lambda^{k_n} L^n,$$
where $\Lambda^{k_s} L^s$ denotes the composition of the functor of Lie power and the functor of exterior power.  
Here the sum runs over the set of ordered $n$-tuples of non-negative integers $(k_1,\dots,k_n)$ such that  $k_1+\dots+k_n=i$ and $k_1\cdot 1+k_2 \cdot 2 + \dots+k_n\cdot n=n.$
\end{Corollary}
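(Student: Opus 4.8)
The plan is to deduce both assertions directly from Proposition \ref{prop_CE}, applied to the graded Lie algebra $\gg = L^*(A)$ with $\gg_s = L^s(A)$, together with the computation of the homology of the free Lie algebra recalled above.

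First I would establish the component formula. To apply Proposition \ref{prop_CE} I need each homogeneous piece $\gg_s = L^s(A)$ to be free as a module. This follows from the freeness of $L^*(A)$ recalled above: the Hall (or Lyndon) basis of \cite{Shirshov}, \cite{Reutenauer} respects the grading, and so exhibits each $L^s(A)$ as a free module. Proposition \ref{prop_CE} then yields, naturally in $A$, the isomorphism
$${\sf C}_i^{(n)}(A) = C_i^{(n)}(L^*(A)) \cong \bigoplus_{\substack{k_1+\dots+k_n=i \\ \sum_{s} s\, k_s = n}} \Lambda^{k_1} L^1(A) \otimes \dots \otimes \Lambda^{k_n} L^n(A),$$
which is exactly the asserted description of ${\sf C}_i^{(n)}$. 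Since each summand is a composite of Lie powers, exterior powers and tensor products, the isomorphism is natural in $A$, hence an isomorphism of functors ${\sf FMod}(R) \to {\sf Mod}(R)$.

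For acyclicity, the key point is that the differential preserves internal degree (this was already noted in the construction of the subcomplexes $C_\bullet^{(n)}$), so the decomposition $C_\bullet(L^*(A)) = \bigoplus_{n} C_\bullet^{(n)}(L^*(A))$ is a decomposition into subcomplexes, and homology commutes with it. Therefore $H_i({\sf C}_\bullet^{(n)}(A))$ is precisely the internal-degree-$n$ component of the Lie algebra homology $H_i(L^*(A), R)$. I would then invoke the homology of the free Lie algebra recalled above: $H_0(L^*(A)) = R$ lies in internal degree $0$, $H_1(L^*(A)) = A = L^1(A)$ lies in internal degree $1$, and $H_i(L^*(A)) = 0$ for $i \geq 2$. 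Consequently, for $n \geq 2$ every internal-degree-$n$ component of $H_*(L^*(A), R)$ vanishes, i.e. $H_i({\sf C}_\bullet^{(n)}(A)) = 0$ for all $i$, which is the claimed acyclicity.

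Each step is short given the machinery already in place, so there is no single hard obstacle; the two points needing care are (i) confirming that each homogeneous piece $L^s(A)$ is genuinely free as a module, so that Proposition \ref{prop_CE} applies, and (ii) keeping the internal grading (induced by $\gg = L^*(A)$) distinct from the homological grading, so that the surviving class $H_1 = L^1(A)$ is seen to sit in internal degree $1$ and thus obstructs exactness only for $n = 1$, leaving ${\sf C}_\bullet^{(n)}$ acyclic for every $n \geq 2$.
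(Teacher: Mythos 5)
Your proposal is correct and follows exactly the route the paper intends: the paper gives no separate proof, simply asserting that Proposition \ref{prop_CE} (applied to $\gg = L^*(A)$, whose graded pieces $L^s(A)$ are free) yields the component formula, and that acyclicity for $n\geq 2$ follows because $H_0(L^*(A))=R$ and $H_1(L^*(A))=A$ are concentrated in internal degrees $0$ and $1$ while $H_i(L^*(A))=0$ for $i>1$. Your write-up just makes explicit the bookkeeping of internal versus homological degree that the paper leaves implicit.
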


\begin{Remark} Note that  ${\sf C}_i^{(n)}=0$ for $i\notin\{1,\dots, n\},$ and that there are isomorphisms ${\sf C}_n^{(n)}=\Lambda^n$ and ${\sf C}_1^{(n)}=L^n.$ In other words ${\sf C}_\bullet^{(n)}$ is an exact sequence that connects $\Lambda^n$ and $L^n.$
$${\sf C}_\bullet^{(n)}: \hspace{1cm} 0\to \Lambda^n \to \dots \to L^n \to 0.$$
\end{Remark}

\section{\bf Connectivity of functors}

For $n\geq 0$ we say that a simplicial module $A_\bullet$ is $n$-connected, if $\pi_i(A_\bullet)=0$ for $i\leq n.$

\begin{Lemma}\label{lemma_tensor} Let $A_\bullet$ be an $n$-connected simplicial module and $B_\bullet$ be an $m$-connected free simplicial module. Then $A_\bullet\otimes B_\bullet$ is $n+m+1$-connected.  
\end{Lemma}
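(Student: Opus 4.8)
The plan is to reduce the statement to a computation with chain complexes via the Dold--Kan correspondence and then bound the connectivity with a spectral sequence. Writing $N$ for the normalized chains functor, the hypotheses translate into $H_i(NA_\bullet)=0$ for $i\le n$ and $H_j(NB_\bullet)=0$ for $j\le m$, both complexes being concentrated in non-negative degrees, and the assertion to prove becomes $H_k\bigl(N(A_\bullet\otimes B_\bullet)\bigr)=0$ for $k\le n+m+1$.

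First I would record two structural facts. Since $B_\bullet$ is free, each $B_q$ is a free module and the normalization theorem exhibits $N_qB_\bullet$ as a direct summand of $B_q$; hence $NB_\bullet$ is a bounded-below complex of projective, in particular flat, modules. Second, the Eilenberg--Zilber theorem provides a natural chain homotopy equivalence $N(A_\bullet\otimes B_\bullet)\simeq NA_\bullet\otimes NB_\bullet$, so it suffices to prove the connectivity bound for the tensor product of the two chain complexes $C:=NA_\bullet$ and $D:=NB_\bullet$.

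For the final step I would filter the double complex $C\otimes D$ by the degree in the flat factor $D$ and run the associated spectral sequence. Because each $D_j$ is flat, the first page is $E^1_{i,j}=H_i(C)\otimes D_j$, which already vanishes for $i\le n$; passing to $E^2_{i,j}=H_j\bigl(H_i(C)\otimes D_\bullet\bigr)$ and using that $D$ is a bounded-below complex of flat modules with $H_q(D)=0$ for $q\le m$, one checks, for instance from the hyper-Tor spectral sequence $\mathrm{Tor}_p(M,H_q(D))\Rightarrow H_{p+q}(M\otimes D)$ applied to $M=H_i(C)$, that $E^2_{i,j}=0$ whenever $j\le m$. Thus $E^2_{i,j}$ can be nonzero only for $i\ge n+1$ and $j\ge m+1$, so every surviving term sits in total degree $i+j\ge n+m+2$, giving $H_k(C\otimes D)=0$ for $k\le n+m+1$, as desired.

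The one point that needs care, and the only real obstacle, is the flatness bookkeeping: one cannot simply replace $C$ and $D$ by their homologies, since the naive K\"unneth formula would introduce $\mathrm{Tor}$ terms, and the argument goes through precisely because $D$ consists of flat modules, which is where freeness of $B_\bullet$ enters. I expect the cleanest packaging of the last step to be the K\"unneth (hyper-Tor) spectral sequence $E^2_{s,t}=\bigoplus_{p+q=t}\mathrm{Tor}_s(H_pC,H_qD)\Rightarrow H_{s+t}(C\otimes D)$, from which the bound $s+t=s+p+q\ge 0+(n+1)+(m+1)$ is immediate; the double-complex version above is an equivalent but more self-contained route to the same conclusion.
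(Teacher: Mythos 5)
Your argument is correct and is essentially the paper's own proof: both reduce to $NA_\bullet\otimes NB_\bullet$ via Eilenberg--Zilber, use that $N_qB_\bullet$ is a direct summand of the free module $B_q$ (hence projective/flat), and conclude from the K\"unneth spectral sequence $E^2_{p,q}=\bigoplus_{s+t=q}\mathrm{Tor}^R_p(\pi_s A_\bullet,\pi_t B_\bullet)$ that everything vanishes in total degree at most $n+m+1$. Your intermediate double-complex filtration is just a more self-contained derivation of the same spectral sequence, so there is no substantive difference.
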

\begin{proof}
Consider their component-wise tensor product $A_\bullet \otimes B_\bullet.$ The Eilenberg-Zilber theorem imply that $\pi_i(A_\bullet\otimes B_\bullet)\cong H_i(NA_\bullet \otimes NB_\bullet ),$ where $NC_\bullet$ denotes the Moore complex of $C_\bullet.$ Since $N_iB_\bullet$ is a direct summand of $B_i,$ it is projective module. This gives the following variant of the K\"unneth spectral sequence: 
$$E^2_{pq}=\bigoplus_{s+t=q} {\sf Tor}^R_p(\pi_s(A_\bullet),\pi_t(B_\bullet))\Rightarrow \pi_{p+q}(A_\bullet \otimes B_\bullet).$$
If $s+t\leq n+m+1,$ then either $s<n+1$ or $t<m+1.$ Hence $E_{pq}^2=0$ for $p+q\leq n+m+1.$ Therefore, $A_\bullet\otimes B_\bullet$ is $n+m+1$-connected.  
\end{proof}

A functor from the category of modules to itself
$$\FF:{\sf Mod}(R) \longrightarrow {\sf Mod}(R)$$
is said to be $n$-connected if for any $k\geq 0$ and any $k$-connected {\it free} simplicial module $A_\bullet$ the simplicial module $\FF(A_\bullet)$ is $k+n$-connected. 

\begin{Lemma}\label{lemma_conn} Let $\FF:{\sf Mod}(R)\to {\sf Mod}(R)$ be an $n$-connected functor and $\GG:{\sf Mod}(R)\to {\sf Mod}(R)$ be $m$-connected functor. Assume that $\GG$ sends free modules to free modules. Then the composition $\FF \GG$ is $n+m$-connected and the tensor product $\FF\otimes \GG$ is $n+m+1$-connected.
\end{Lemma}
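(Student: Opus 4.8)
The plan is to unwind the definition of $n$-connectivity for functors and reduce both assertions to the hypotheses together with Lemma~\ref{lemma_tensor}, applying everything degreewise. The one structural input that makes this work is the assumption that $\GG$ carries free modules to free modules: since a free simplicial module is free in each degree and a functor acts on a simplicial module degreewise, $\GG(A_\bullet)$ is again free in each degree, and hence a free simplicial module of the kind demanded by the connectivity definitions and by Lemma~\ref{lemma_tensor}. I would isolate this observation first, since it is the only place the hypothesis on $\GG$ is used, and it feeds into both parts.

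For the composition $\FF\GG$, I would fix $k\geq 0$ and a $k$-connected free simplicial module $A_\bullet$. Because $\GG$ is $m$-connected, $\GG(A_\bullet)$ is $k+m$-connected; by the observation above it is also free. Feeding this $k+m$-connected free simplicial module into the $n$-connectivity of $\FF$ (with the role of the base connectivity now played by $k+m\geq 0$) yields that $\FF\GG(A_\bullet)=\FF(\GG(A_\bullet))$ is $(k+m)+n$-connected, that is, $k+(n+m)$-connected. As $A_\bullet$ was arbitrary, $\FF\GG$ is $n+m$-connected.

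For the tensor product $\FF\otimes\GG$, I would again fix a $k$-connected free simplicial module $A_\bullet$. Then $\FF(A_\bullet)$ is $k+n$-connected, while $\GG(A_\bullet)$ is $k+m$-connected and free. Applying Lemma~\ref{lemma_tensor} with the first factor $\FF(A_\bullet)$ and the free second factor $\GG(A_\bullet)$ shows that $\FF(A_\bullet)\otimes\GG(A_\bullet)$ is $(k+n)+(k+m)+1$-connected. Since $k\geq 0$ we have $2k+n+m+1\geq k+n+m+1$, and a $j$-connected simplicial module is a fortiori $j'$-connected for every $j'\leq j$; hence the product is in particular $k+(n+m+1)$-connected, so $\FF\otimes\GG$ is $n+m+1$-connected.

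There is essentially no hard obstacle here: both parts are formal once the degreewise freeness of $\GG(A_\bullet)$ is recorded, which is exactly the purpose of the hypothesis on $\GG$. The only point worth flagging is that the bound coming out of Lemma~\ref{lemma_tensor} is actually stronger than required (it carries an extra $k$), so I would note explicitly that this surplus connectivity is harmless rather than attempt to land precisely on $k+n+m+1$.
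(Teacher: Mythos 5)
Your argument is correct and is exactly the one the paper intends: the paper's proof consists of the two sentences ``The fact about the composition is obvious'' and ``The fact about the tensor product follows from Lemma \ref{lemma_tensor}'', and your write-up simply spells out those two steps, correctly isolating the degreewise freeness of $\GG(A_\bullet)$ as the place where the hypothesis on $\GG$ is used and correctly noting that the surplus $k$ in the bound from Lemma \ref{lemma_tensor} is harmless.
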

\begin{proof}
The fact about the composition is obvious.
The fact about the tensor product follows from Lemma \ref{lemma_tensor}.
\end{proof}

\begin{Lemma}\label{lemma_resolution} Let $$0 \to \FF_n\to  \dots \to  \FF_1 \to \FF_0 \to \GG\to 0 $$
be an exact sequence of functors such that $\FF_i$ is $n-i$-connected. Then $\GG$ is $n$-connected.
\end{Lemma}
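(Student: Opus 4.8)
The plan is to induct on the length $n$ of the resolution, reducing everything to the case of a single short exact sequence, where the long exact sequence in homotopy does all the work. First I would note that $n$-connectivity of a functor only constrains its values on free simplicial modules, and that an exact sequence of functors, evaluated termwise at a free simplicial module $A_\bullet$, produces a sequence of simplicial modules that is exact in every simplicial degree, i.e.\ an exact sequence of simplicial modules. So it suffices to prove: if $A_\bullet$ is $k$-connected and free, then $\GG(A_\bullet)$ is $k+n$-connected.

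For $n=0$ the sequence reads $0\to \FF_0 \to \GG\to 0$, so $\GG\cong \FF_0$ and the claim is immediate. For the inductive step I would introduce the kernel functor $\HH:=\ker(\FF_0\to\GG)$, which splits the resolution into a short exact sequence $0\to \HH\to \FF_0\to \GG\to 0$ together with an exact sequence $0\to \FF_n\to \dots\to \FF_1\to \HH\to 0$ of length $n-1$. In the latter the term $\FF_{i+1}$ occupies homological degree $i$ and is $((n-1)-i)$-connected, so the induction hypothesis applies and shows that $\HH$ is $(n-1)$-connected.

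It then remains to treat the short exact sequence $0\to \HH\to \FF_0\to \GG\to 0$. Evaluating at a $k$-connected free $A_\bullet$ yields a short exact sequence of simplicial modules, and since the Moore complex functor $N$ is exact (it is part of the Dold--Kan equivalence), this gives a short exact sequence of complexes and hence a long exact sequence
$$\dots \to \pi_j(\FF_0(A_\bullet)) \to \pi_j(\GG(A_\bullet)) \to \pi_{j-1}(\HH(A_\bullet)) \to \dots$$
Here $\FF_0(A_\bullet)$ is $k+n$-connected and $\HH(A_\bullet)$ is $k+n-1$-connected, so for $j\le k+n$ both neighbouring groups $\pi_j(\FF_0(A_\bullet))$ and $\pi_{j-1}(\HH(A_\bullet))$ vanish, forcing $\pi_j(\GG(A_\bullet))=0$. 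Thus $\GG(A_\bullet)$ is $k+n$-connected, and since $A_\bullet$ and $k$ were arbitrary, $\GG$ is $n$-connected.

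The argument is essentially routine homological algebra, so the only points requiring care are the connectivity bookkeeping and the source of the long exact sequence. The one thing I would be careful about is that the short exact sequence of simplicial modules here is not assumed levelwise free, so the long exact sequence in homotopy must come from exactness of normalization rather than from any K\"unneth-type argument as in Lemma \ref{lemma_tensor}; the crux is then merely to check that the degree shift in the connecting homomorphism aligns the two vanishing ranges precisely at $j\le k+n$.
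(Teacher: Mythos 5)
Your proof is correct and follows essentially the same route as the paper: induction on $n$, splitting off the kernel $\HH=\ker(\FF_0\to\GG)$, applying the inductive hypothesis to the truncated resolution, and concluding via the long exact sequence in homotopy. The extra care you take in justifying the long exact sequence (exactness of normalization rather than a K\"unneth argument) and in checking the degree shift is a sound elaboration of what the paper leaves implicit.
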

\begin{proof}
The proof is by induction. For $n=0$ this is obvious. Assume that $n\geq 1$ and that the statement holds for smaller numbers. Set $\HH:={\sf Ker}(\FF_0\to \GG).$ Then by the induction hypothesis $\HH$ is $n-1$-connected. The long exact sequence 
$$\dots \to \pi_i(\HH(A_\bullet))\to \pi_i(\FF_0(A_\bullet))\to \pi_i(\GG(A_\bullet)) \to \pi_{i-1}(\HH(A_\bullet)) \to \dots$$
implies that $\GG$ is $n$-connected. 
\end{proof}

\begin{Proposition}\label{prop_exterior} The functor of exterior power $\Lambda^n$ is $n-1$-connected. 
\end{Proposition}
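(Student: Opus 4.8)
The plan is to prove the statement by an \emph{induction on $n$}, but for the induction to close I will enlarge the statement to track the divided power $\Gamma^n$ alongside $\Lambda^n$: I will show simultaneously that for every $n\geq 1$ both functors $\Lambda^n$ and $\Gamma^n$ are $n-1$-connected. The reason for bringing in $\Gamma^n$ (rather than the symmetric power $S^n$) is that the mechanism I want to exploit is the Koszul resolution of $\Lambda^n$, and over an arbitrary ring $R$ it is the \emph{divided-power} Koszul complex, not the symmetric one, that is exact on free modules. Throughout I treat $\Lambda^m$ and $\Gamma^m$ as functors ${\sf FMod}(R)\to{\sf Mod}(R)$; crucially both send free modules to free modules, so Lemma \ref{lemma_conn} will apply to their tensor products.

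The heart of the argument is the following pair of exact sequences of functors on free modules, which are the two graded pieces of the Koszul complex $\Gamma^\bullet\otimes\Lambda^\bullet$ and its dual:
\[
0 \to \Gamma^n \to \Gamma^{n-1}\otimes \Lambda^1 \to \dots \to \Gamma^1 \otimes \Lambda^{n-1} \to \Lambda^n \to 0,
\]
\[
0 \to \Lambda^n \to \Lambda^{n-1}\otimes \Gamma^1 \to \dots \to \Lambda^1 \otimes \Gamma^{n-1} \to \Gamma^n \to 0.
\]
Each is split exact on a free module of finite rank (one verifies this directly, the splitting being the classical Koszul homotopy corrected to be integral), and hence exact on every free module since both exterior and divided powers commute with filtered colimits. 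Note that the first sequence presents $\Lambda^n$ as the rightmost \emph{quotient} term, which is exactly the shape required by Lemma \ref{lemma_resolution}, and symmetrically for $\Gamma^n$; this is what makes $\Lambda^n$ and $\Gamma^n$ gain, rather than lose, connectivity.

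The inductive step then runs as follows. Assuming $\Lambda^m$ and $\Gamma^m$ are $m-1$-connected for all $m<n$, I apply Lemma \ref{lemma_resolution} (with $N=n-1$) to the first sequence. The term $\Gamma^{j+1}\otimes\Lambda^{n-1-j}$ sitting in homological position $j$ must be shown to be $n-1-j$-connected. For $0\leq j\leq n-2$ both factors have degree strictly less than $n$, so by the induction hypothesis $\Gamma^{j+1}$ is $j$-connected and $\Lambda^{n-1-j}$ is $n-2-j$-connected, whence Lemma \ref{lemma_conn} gives that the tensor product is $n-1$-connected, which is more than enough. The only remaining term is the leftmost one, $\Gamma^n$, which is required to be merely $0$-connected. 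Feeding this into Lemma \ref{lemma_resolution} yields that $\Lambda^n$ is $n-1$-connected; the dual sequence gives $\Gamma^n$ the same way, with the roles of $\Lambda^n$ and $\Gamma^n$ interchanged. The base case $n=1$ is immediate since $\Lambda^1=\Gamma^1={\rm Id}$ is $0$-connected.

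The one input that is not supplied by the induction itself is the $0$-connectivity of $\Lambda^n$ and $\Gamma^n$ used at the left end of each Koszul sequence, and this is the step I expect to require the most care. It amounts to the statement that $\pi_0(\Lambda^n A_\bullet)=0$ (and likewise for $\Gamma^n$) whenever $A_\bullet$ is a $0$-connected free simplicial module, i.e. $\pi_0(A_\bullet)=0$. I would deduce this from the fact that $\pi_0$ of a simplicial module is the coequalizer of the reflexive pair $(d_0,d_1)$, together with the observations that $\Lambda^n$ and $\Gamma^n$ preserve reflexive coequalizers and are reduced ($\Lambda^n(0)=\Gamma^n(0)=0$ for $n\geq 1$), so that $\pi_0(\Lambda^n A_\bullet)=\Lambda^n(\pi_0 A_\bullet)=\Lambda^n(0)=0$. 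The other potentially delicate point, the exactness of the Koszul complexes over an arbitrary commutative ring, is where the choice of $\Gamma$ over $S$ is essential: the symmetric analogue already fails for $n=2$ over $\ZZ$, whereas the divided-power version remains split exact, and checking this on free modules of finite rank is the genuinely arithmetic part of the proof.
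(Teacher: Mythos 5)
Your first sequence (the dual Koszul complex $0\to\Gamma^n\to\Gamma^{n-1}\otimes\Lambda^1\to\dots\to\Gamma^1\otimes\Lambda^{n-1}\to\Lambda^n\to 0$) is indeed exact on free modules over any commutative ring, and feeding it into Lemma \ref{lemma_resolution} is a legitimate way to transfer connectivity from the $\Gamma^m$'s to $\Lambda^n$. The fatal problem is your second sequence, $0\to\Lambda^n\to\Lambda^{n-1}\otimes\Gamma^1\to\dots\to\Lambda^1\otimes\Gamma^{n-1}\to\Gamma^n\to 0$: it is \emph{not} exact over a general commutative ring. Already for $n=2$, $R=\ZZ$ and $A=\ZZ$ free of rank one, the last map $A\otimes A\to\Gamma^2(A)$ is the divided-power multiplication $a\otimes a\mapsto\gamma_1(a)^2=2\gamma_2(a)$, with cokernel $\ZZ/2$; more generally, naturality (test against the fold map $R^2\to R$) forces any natural transformation $A\otimes\Gamma^{n-1}(A)\to\Gamma^n(A)$ to send $a\otimes\gamma_{n-1}(a)$ into $n\cdot\Gamma^n(A)$, so no such map is surjective for $n\geq 2$. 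You have the arithmetic backwards: the orientation that is exact on free modules over every commutative ring is the one ending in a \emph{symmetric} power, $0\to\Lambda^n\to\Lambda^{n-1}\otimes S^1\to\dots\to\Lambda^1\otimes S^{n-1}\to S^n\to 0$ (contrary to your claim that it fails for $n=2$ over $\ZZ$), while there is no Koszul-type complex presenting $\Gamma^n$ as the quotient term. Since your proof of the $(n-1)$-connectivity of $\Gamma^n$ rests entirely on the non-exact sequence, and your proof for $\Lambda^n$ consumes the $(m-1)$-connectivity of $\Gamma^m$ for $m<n$, the simultaneous induction never closes.

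There is a second, smaller gap: the $0$-connectivity needed for the leftmost term $\Gamma^n$ is $0$-connectivity \emph{as a functor} in the sense of the paper, i.e.\ $\Gamma^n(A_\bullet)$ must be $k$-connected for every $k\geq 0$ and every $k$-connected free $A_\bullet$; your reflexive-coequalizer argument only yields $\pi_0(\Gamma^n A_\bullet)=0$ when $\pi_0(A_\bullet)=0$ and says nothing about $\pi_1,\dots,\pi_k$. This part is repairable --- any reduced functor is $0$-connected, since a $k$-connected free simplicial module is simplicially homotopy equivalent to one vanishing in degrees $\leq k$ and arbitrary functors preserve simplicial homotopies --- but the connectivity of $\Gamma^n$ in the intermediate degrees $1,\dots,n-1$ is exactly the hard content that your argument leaves unproved. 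The natural way to obtain it is the d\'ecalage equivalence $\Lambda^n(B_\bullet[1])\sim\Gamma^n(B_\bullet)[n]$ of Illusie, and that is precisely the paper's own proof, applied directly to $\Lambda^n$ with no Koszul induction at all.
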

\begin{proof} 
The d\'ecalage formula \cite[Prop. 4.3.2.1]{Illusie} for exterior and divided powers  $\Lambda^n, \Gamma^n$   gives a homotopy equivalence for any free simplicial module $B_\bullet$
$$\Lambda^n(B_\bullet [1] )\sim \Gamma^n(B_\bullet)[n].$$
Any $0$-connected free simplicial module $A_\bullet$ is homotopy equivelent to a simplicial module of the form $B_\bullet[1],$ where $B_\bullet$ is also a free simplicial module (it follows from the same fact for non-negatively graded chain complexes). Moreover, if $A_\bullet$ is $k$-connected, we can chose $B_\bullet$ so that $B_i=0$ for $i\leq k-1.$  Hence $\pi_i(\Lambda^n(A_\bullet))=\pi_i(\Lambda^n(B_\bullet[1]))=\pi_{i-n}(\Gamma^n(B_\bullet))=0$ for $i\leq k+n-1.$
\end{proof}

\begin{Lemma}\label{lemma_log} For any two sequences of positive integer numbers $u_1,\dots,u_m$ and $v_1,\dots,v_m$ the following inequality holds 
$$\sum_{s=1}^m (u_s +  \log_2 v_s)\geq 1+ \log_2 \left(\sum_{s=1}^m u_sv_s  \right).$$
\end{Lemma}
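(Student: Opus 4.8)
The plan is to clear the logarithms by exponentiating with base $2$. Writing $U := \sum_{s=1}^m u_s$, I would note that the left-hand side equals $U + \log_2\!\big(\prod_{s} v_s\big)$, so after moving $\sum_s \log_2 v_s$ to the right and exponentiating, the claimed inequality becomes equivalent to the purely multiplicative statement
$$2^{U-1} \ \geq\ \frac{\sum_{s=1}^m u_s v_s}{\prod_{t=1}^m v_t} \ =\ \sum_{s=1}^m \frac{u_s}{\prod_{t\neq s} v_t}.$$
Thus the whole lemma reduces to this single inequality, and the logarithms never need to be handled again.

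Next I would exploit that the $v_t$ are positive \emph{integers}, hence $v_t \geq 1$, so each product $\prod_{t\neq s} v_t$ is at least $1$. Consequently $\frac{u_s}{\prod_{t\neq s} v_t} \leq u_s$ for every $s$, and summing yields
$$\sum_{s=1}^m \frac{u_s}{\prod_{t\neq s} v_t} \ \leq\ \sum_{s=1}^m u_s \ =\ U.$$
At this stage the entire dependence on the $v_s$ has disappeared: they only ever help, precisely because they are $\geq 1$. It therefore suffices to establish that $2^{U-1} \geq U$.

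Finally, $2^{U-1}\geq U$ holds for every positive integer $U$ by an immediate induction (at $U=1$ one has $2^0 = 1 = U$, and $2^{U} = 2\cdot 2^{U-1} \geq 2U \geq U+1$). Chaining the two bounds gives $2^{U-1}\geq U \geq \sum_s u_s/\prod_{t\neq s} v_t$, which is exactly the reduced inequality. The one point worth flagging as the real obstacle is that integrality is genuinely used and is not a cosmetic hypothesis: the estimate $2^{x-1}\geq x$ \emph{fails} for real $x$ just above $1$ (for instance $x=\tfrac32$ gives $2^{1/2}=\sqrt{2}<\tfrac32$), so the step $2^{U-1}\geq U$ relies on $U=\sum_s u_s$ being an integer $\geq 1$. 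Once the reduction of the first paragraph is spotted, everything else is routine.
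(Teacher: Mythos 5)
Your proof is correct. Both you and the paper begin the same way, by exponentiating to turn the logarithmic inequality into the multiplicative statement $\prod_{s=1}^m 2^{u_s}v_s\geq 2\sum_{s=1}^m u_sv_s$; where you diverge is in how that statement gets proved. The paper establishes it by induction on the number of terms $m$ (the inductive step amounting to $2A\cdot 2B\geq 2(A+B)$ for $A,B\geq 1$), keeping the $u_s$ and $v_s$ in play throughout. You instead observe that since every $v_t\geq 1$, the $v$'s can be discarded entirely --- each term $u_s/\prod_{t\neq s}v_t$ is at most $u_s$ --- which collapses the whole lemma to the single-variable inequality $2^{U-1}\geq U$ for the positive integer $U=\sum_s u_s$. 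Your route buys a cleaner separation of the two uses of integrality ($v_s\geq 1$ versus $2^{u-1}\geq u$) and replaces a multi-variable induction with a one-line one; your remark that $2^{x-1}\geq x$ genuinely fails for non-integer $x$ near $1$ is a worthwhile observation that the paper leaves implicit. The paper's induction on $m$ is terser to state but hides the same content. Both arguments are complete and correct.
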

\begin{proof}
It is easy to prove by induction that $\prod_{s=1}^m 2^{u_s}v_s\geqslant 2\sum_{s=1}^m u_sv_s.$  If we apply logarithms, we obtain the required statement.   
\end{proof}

\begin{Theorem}
The functor of Lie power $L^n$ is $\lceil \log_2 n \rceil $-connected. 
\end{Theorem}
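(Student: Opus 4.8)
The plan is to induct on $n$, using the acyclic complex ${\sf C}^{(n)}_\bullet$ of Corollary \ref{cor_C} as a functorial resolution of $L^n$ and feeding it into Lemma \ref{lemma_resolution}. For the base case $n=1$ one has $L^1={\rm Id}$, which is trivially $0$-connected, matching $\lceil\log_2 1\rceil=0$. So I would assume $n\geq 2$ and that $L^m$ is $\lceil\log_2 m\rceil$-connected for every $m<n$.

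By the Remark following Corollary \ref{cor_C}, the complex ${\sf C}^{(n)}_\bullet$ is an exact sequence $0\to {\sf C}^{(n)}_n\to\cdots\to {\sf C}^{(n)}_1\to 0$ with ${\sf C}^{(n)}_1=L^n$. Re-indexing by $\FF_i:={\sf C}^{(n)}_{i+2}$ turns it into a resolution $0\to\FF_{n-2}\to\cdots\to\FF_0\to L^n\to 0$, so by Lemma \ref{lemma_resolution} it suffices to check that each $\FF_i={\sf C}^{(n)}_{i+2}$ is $(\lceil\log_2 n\rceil-i)$-connected; equivalently, that ${\sf C}^{(n)}_j$ is $(\lceil\log_2 n\rceil-j+2)$-connected for every $2\leq j\leq n$. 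Since a finite direct sum of simplicial modules is as connected as its least connected summand, and since $\lceil\log_2 n\rceil-j+2\leq\lceil\log_2 n\rceil$ for $j\geq 2$, I would reduce to showing that every summand $T=\Lambda^{k_1}L^1\otimes\cdots\otimes\Lambda^{k_n}L^n$ of ${\sf C}^{(n)}_j$ (with $\sum_s k_s=j\geq 2$ and $\sum_s s\,k_s=n$) is $\lceil\log_2 n\rceil$-connected.

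To estimate the connectivity of such a summand, I would first observe that the two constraints force $k_n=0$, so every factor $\Lambda^{k_s}L^s$ with $k_s>0$ has $s<n$, and the inductive hypothesis applies to $L^s$. Proposition \ref{prop_exterior} makes $\Lambda^{k_s}$ into a $(k_s-1)$-connected functor, and since $L^s$ sends free modules to free modules, Lemma \ref{lemma_conn} shows the composite $\Lambda^{k_s}L^s$ is $(k_s-1+\lceil\log_2 s\rceil)$-connected. Iterating the tensor-product clause of Lemma \ref{lemma_conn} over the support $S=\{s:k_s>0\}$ (each factor again sending free modules to free modules) then shows $T$ is $D$-connected with
$$D=\sum_{s\in S}\bigl(k_s-1+\lceil\log_2 s\rceil\bigr)+\bigl(|S|-1\bigr)=\sum_{s\in S}k_s+\sum_{s\in S}\lceil\log_2 s\rceil-1.$$
Applying Lemma \ref{lemma_log} with $u_s=k_s$ and $v_s=s$ (positive integers for $s\in S$, with $\sum_{s\in S}u_sv_s=\sum_{s\in S}s\,k_s=n$) gives $\sum_{s\in S}(k_s+\log_2 s)\geq 1+\log_2 n$, so $D\geq\log_2 n$; as $D$ is an integer, $D\geq\lceil\log_2 n\rceil$, which is what I want.

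I expect the only real obstacle to be the bookkeeping that matches the connectivity produced by the composition and tensor estimates against the shifted connectivity $\lceil\log_2 n\rceil-j+2$ that Lemma \ref{lemma_resolution} demands; this is exactly what the elementary Lemma \ref{lemma_log} is built to absorb. The requirement is tightest at $j=2$, where $\lceil\log_2 n\rceil-j+2=\lceil\log_2 n\rceil$, so it is there that the full $\lceil\log_2 n\rceil$-connectivity must actually be established, with slack to spare for larger $j$ and, in particular, at the $\Lambda^n={\sf C}^{(n)}_n$ end of the complex.
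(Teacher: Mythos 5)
Your proposal is correct and follows essentially the same route as the paper's own proof: induction on $n$ via the acyclic complex ${\sf C}^{(n)}_\bullet$ and Lemma \ref{lemma_resolution}, the observation that $k_n=0$ forces each factor to fall under the inductive hypothesis, the connectivity count $\sum_{s}(k_{s}-1+\lceil\log_2 s\rceil)+|S|-1$ from Proposition \ref{prop_exterior} and Lemma \ref{lemma_conn}, and the final estimate from Lemma \ref{lemma_log} together with integrality. Your explicit re-indexing of the resolution and the remark that the bound is tightest at $j=2$ are just a more detailed spelling-out of the paper's ``it is enough to check that ${\sf C}^{(n)}_i$ is $\lceil\log_2 n\rceil$-connected for $i\geq 2$.''
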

\begin{proof}
The proof is by induction. For $n=1$ we have $L^1={\rm Id}$ and this is obvious.   Assume that $n\geq 2$ and that the statement holds for all smaller numbers. Consider the acyclic chain complex ${\sf C}^{(n)}_\bullet$ (Corollary \ref{cor_C}).   Using Lemma \ref{lemma_resolution} we obtain that it is enough to check that the functor 
$${\sf C}^{(n)}_i=\bigoplus_{
\substack
{
k_1+\dots+k_n=i \\
k_1\cdot 1+k_2 \cdot 2 + \dots+k_n\cdot n=n
}
}
\ \Lambda^{k_1} L^1 \otimes \Lambda^{k_2} L^2 \otimes \dots \otimes \Lambda^{k_n} L^n$$
is $ \lceil \log_2 n \rceil$-connected for $i\geq 2.$ It is enough to prove this for each summand. 

Fix an $n$-tuple of  $(k_1,\dots,k_n)$ such that $k_1+\dots+k_n=i\geq 2$ and $k_1\cdot 1+k_2\cdot 2 +\dots +k_n\cdot n=n.$ Note that $i\geq 2$ implies $k_n=0.$ Some of the numbers $k_j$ equal to zero. Denote by $j_1,\dots,j_m$ the indexes corresponding to non-zero numbers $k_{j_s}\ne 0.$
By Lemma \ref{lemma_conn} the functor $\Lambda^{k_j}L^j$ is $k_j-1+\lceil \log_2 j \rceil$-connected for $j<n$. Then again by Lemma \ref{lemma_conn} the tensor product $\Lambda^{k_1} L^1 \otimes \Lambda^{k_2} L^2 \otimes \dots \otimes \Lambda^{k_n} L^n$ is \hbox{$\sum_{s=1}^m \big(k_{j_s}-1+\lceil \log_2 j_s \rceil \big)+m-1$}-connected. Using Lemma \ref{lemma_log} we obtain 
$$\sum_{s=1}^m \big(k_{j_s}-1+ \log_2 j_s  \big)+m-1 = \sum_{s=1}^m \big(k_{j_s}+ \log_2 j_s  \big)-1 \geq \log_2 n.$$ The assertion follows. 
\end{proof}

\section{\bf Connectivity of $L^{2^n}$}

For $k\geq 0$ we denote by $R[k+1]$ the chain complex concentrated in $k+1$-st degree whose $k+1$-st component is equal to $R$. The Dold-Kan corresponding simplicial module is denoted by $K(R,k+1).$ Note that $K(R,k+1)$ is a $k$-connected free simplicial module. 

\begin{Proposition}\label{prop_L^2^n}
Let $R=\ZZ$ or $R=\ZZ/2$ and $n\geq 0.$ Then $L^{2^n}$ is not $n+1$-connected. Moreover, for any $k\geq 0$ $$\pi_{n+1+k}(L^{2^n}(K(R,k+1))\ne 0.$$
\end{Proposition}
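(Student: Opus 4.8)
Write $m=k+1$, so that $K(R,m)$ is $(m-1)$-connected; by the connectivity theorem just proved, $L^{2^n}(K(R,m))$ is $(m-1+n)$-connected, and hence $\pi_{n+m}$ is its first potentially non-zero homotopy group. Thus the assertion is exactly that this bottom homotopy group does not vanish, and all the work is to exhibit a single non-zero class in the critical degree $n+m=n+1+k$. The plan is to first settle $R=\ZZ/2$, where the homotopy of Lie powers is accessible through the $\Lambda$-algebra, and then to descend to $R=\ZZ$ by a naturality argument.

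For $R=\ZZ/2$ I would invoke the description of the homotopy of $2$-restricted Lie powers in terms of the $\Lambda$-algebra from \cite{Curtis'71} and \cite{6}. It is worth noting first why there is room for low-dimensional homotopy at all: $L^{2^n}(K(\ZZ/2,m))$ is a simplicial submodule of $K(\ZZ/2,m)^{\otimes 2^n}$, whose homotopy is, by Eilenberg--Zilber and the K\"unneth theorem over the field $\ZZ/2$, just $\ZZ/2$ concentrated in degree $2^n m$; the non-trivial homotopy of $L^{2^n}$ in low degrees appears precisely because, unlike in characteristic $0$, the Lie power is not a natural retract of the tensor power. The cited works compute $\pi_*(L^{2^n}(K(\ZZ/2,m)))$ explicitly, and the point I would extract is that the bottom group, in degree $n+m$, is non-zero: it is represented by the iterated lowest $\Lambda$-operation applied to the fundamental class $\iota_m$, which is a non-zero basis element and, living in the minimal possible degree, cannot be a boundary for the $\Lambda$-differential. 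The mechanism is already transparent for $n=1$, where $L^2=\Lambda^2$: a direct computation via the d\'ecalage formula \cite{Illusie}, giving $\Lambda^2(K(\ZZ/2,m))\simeq \Gamma^2(K(\ZZ/2,m-1))[2]$, yields $\pi_{m+1}(\Lambda^2(K(\ZZ/2,m)))\cong\ZZ/2$, the generator being essentially the divided square of $\iota_m$.

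To pass to $R=\ZZ$ I would use the reduction $\ZZ\to\ZZ/2$, which induces a map of simplicial modules $K(\ZZ,m)\to K(\ZZ/2,m)$ and hence a natural map $L^{2^n}(K(\ZZ,m))\to L^{2^n}(K(\ZZ/2,m))$; here I use that the \emph{ordinary} Lie power of a $\ZZ/2$-module is the same whether formed over $\ZZ$ or over $\ZZ/2$, since it is the Lie subalgebra generated by the module inside the tensor algebra, and $V\otimes_{\ZZ}V=V\otimes_{\ZZ/2}V$ for a $\ZZ/2$-module $V$. Since $\iota_m$ lifts to $K(\ZZ,m)$ and the bottom class over $\ZZ/2$ is the value of a homotopy operation that is natural in $R$, the generator of $\pi_{n+m}(L^{2^n}(K(\ZZ/2,m)))$ lies in the image of this map, so the source is non-zero as well. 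As the explicit $n=1$ computation over $\ZZ$ shows (where the boundary picks up a factor of $2$), the resulting integral class is in general $2$-torsion.

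The elementary parts of this argument are the degree bookkeeping and the descent from $\ZZ/2$ to $\ZZ$. The substantial, non-elementary ingredient --- and the step I expect to be the main obstacle --- is the $\Lambda$-algebra computation itself. Its difficulty is concentrated in matching the word-length and instability data of the $\Lambda$-algebra with the Lie-power exponent $2^n$ and the level $m$, in keeping track of the distinction between ordinary and $2$-restricted Lie powers, and in verifying that the bottom admissible monomial is a non-zero permanent cycle rather than merely some non-zero element somewhere. All of this is exactly what is supplied by the results of \cite{Curtis'71} and \cite{6}, which is why the proposition, unlike the main theorem, rests on non-elementary input.
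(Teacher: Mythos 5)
Your overall strategy (lambda-algebra input for $R=\ZZ/2$, then descent to $R=\ZZ$) matches the paper's, but both halves of your argument have a concrete gap. For $R=\ZZ/2$, the results of \cite{Curtis'71} and \cite{6} that you invoke compute the homotopy of the \emph{$2$-restricted} Lie powers $L^{2^n}_{\sf res}$, not of the ordinary $L^{2^n}$; you name this distinction as a difficulty but then assert it is ``supplied by the cited results,'' which it is not. The paper closes exactly this gap with an explicit chain-level representative: the class $\lambda_1^n\iota$ is represented by the iterated commutator $\tilde\lambda_1^n(\mathfrak{i}_{k+1})$, where $\tilde\lambda_1(x)=[s_0x,s_1x]$, and because this cycle is built from brackets rather than restricted squarings it lies in the sub-object $L^{2^n}(V_\bullet)\subseteq L^{2^n}_{\sf res}(V_\bullet)$; since its image in $\pi_{n+1+k}(L^{2^n}_{\sf res}(V_\bullet))$ is the non-zero element $\lambda_1^n$, its class in $\pi_{n+1+k}(L^{2^n}(V_\bullet))$ is non-zero. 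Without some such argument your $\ZZ/2$ case only establishes non-vanishing for the restricted Lie power.

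For $R=\ZZ$, your route is genuinely different from the paper's and, as written, unsupported at its key step: you claim the bottom class over $\ZZ/2$ is ``the value of a homotopy operation that is natural in $R$'' and hence lies in the image of $\pi_{n+1+k}(L^{2^n}(K(\ZZ,m)))\to\pi_{n+1+k}(L^{2^n}(K(\ZZ/2,m)))$. The lambda-algebra operations are intrinsically mod-$2$ operations and are not a priori defined on the integral homotopy groups, so naturality in $R$ is precisely what needs proof; it can be repaired by again using the explicit cycle $\tilde\lambda_1^n(\mathfrak{i})$, which visibly lifts to a cycle in $L^{2^n}_{\ZZ}(K(\ZZ,m))$, but you do not produce such a representative. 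The paper avoids lifting any class at all: it uses the base-change isomorphism $L^*_\ZZ(A)\otimes\ZZ/2\cong L^*_{\ZZ/2}(A\otimes\ZZ/2)$ together with the universal coefficient exact sequence and the already-proved $n$-connectivity of $L^{2^n}_\ZZ$ (which kills the ${\sf Tor}$ term in the relevant degree) to get an isomorphism
$$\pi_{n+1+k}(L_\ZZ^{2^n}(A_\bullet))\otimes \ZZ/2 \cong \pi_{n+1+k}(L^{2^n}_{\ZZ/2}(V_\bullet))\ne 0,$$
which immediately forces $\pi_{n+1+k}(L_\ZZ^{2^n}(A_\bullet))\ne 0$. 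That argument is both shorter and strictly easier to justify than a lifting argument, so you should either adopt it or supply the explicit integral cycle your naturality claim presupposes.
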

\begin{proof}
(1) Let $R=\ZZ/2.$ We fix $k$ and set $V_\bullet=K(\ZZ/2,k+1).$  Denote by $L^n_{\sf res}:{\sf Vect}(\ZZ/2)\to {\sf Vect}(\ZZ/2)$ the functor of 2-restricted Lie power (see \cite[\S 2.7]{Bahturin}, \cite[\S 7.5]{Curtis'71}). The homotopy groups $\pi_{*}(L_{\sf res}^{2^n}(V_\bullet) )$ are described in terms of the lambda-algebra in \cite[Th. 8.8]{Curtis'71} (see also \cite{6} and discussion after Theorem 7.11 in \cite{Curtis'71}): 
$$\pi_{i+k+1}(L_{\sf res}^{2^n}(V_\bullet) )\cong {\it  \Lambda}^{i,n}(k+2),$$
where ${\it \Lambda}^{i,n}(k+2)$ denotes the vector sub-space of the lambda algebra ${\it \Lambda}$ with the basis given by compositions $\lambda_{i_1} \dots \lambda_{i_n},$ where $i_{s+1}\leq 2i_s,$ $i_1+\dots + i_n=i$ and $i_1\leq k+2.$ In particular, $\lambda_1^n\in {\it \Lambda}^{n,n}(k+2)\ne 0.$ Hence $$\pi_{n+1+k}(L_{\sf res}^{2^n}(V_\bullet) )\ne 0.$$

For arbitrary simplicial Lie algebra 
$\gg_\bullet$ and $t\geq 1$ we define the map $\tilde \lambda_1:\gg_t\to \gg_{t+1}$ by the formula $\tilde \lambda_1(x)=[s_0x,s_1x],$
where $s_0,s_1$ are degeneracy maps.  Denote by $\mathfrak{i}_{k+1}$ the unit of 
$(V_\bullet)_{k+1}=R.$ Then $\tilde \lambda_1^n(\mathfrak{i}_{k+1})\in (L(V_\bullet))_{n+k+1}$ is the element representing $ \lambda_1^n \in {\it \Lambda}^{n,n}(k+2)\cong \pi_{n+1+k}(L^{2^n}_{\sf res} (V_\bullet) )$ (see \cite[Prop. 8.6]{Curtis'71}). By the definition of $\tilde{\lambda}_1,$ the element $\tilde \lambda_1^n(\mathfrak{i}_{k+1} )$ lies in the unrestricted part $L^{2^n}(V_\bullet)$ of $L^{2^n}_{\sf res}(V_\bullet).$ Therefore $\tilde \lambda_1^n(\mathfrak{i}_{k+1} )$ represents a nontrivial element of $\pi_{n+1+k}(L^{2^n}(V_\bullet) )$ and hence $$\pi_{n+1+k}(L^{2^n}(V_\bullet) )\ne 0.$$

(2) Now assume that $R=\ZZ$ and set $A_\bullet=K(\ZZ,k+1).$ We denote by $L_{\ZZ/2}^{*}$ the functor of Lie power over $\ZZ/2,$ which we already discussed, and by $L_\ZZ^*$ the functor of Lie power over $\ZZ$. Then for any free abelian group $A$ we have $L^*_\ZZ(A)\otimes \ZZ/2 \cong L^*_{\ZZ/2}(A\otimes \ZZ/2).$ The universal coefficient theorem gives the following short exact sequence
$$0 \longrightarrow \pi_{i}(L_\ZZ^{2^n}(A_\bullet)) \otimes \ZZ/2 \longrightarrow \pi_{i}(L^{2^n}_{\ZZ/2}(V_\bullet)) \longrightarrow {\sf Tor}_1^\ZZ(\pi_{i-1}( L_\ZZ^{2^n}(A_\bullet) ), \ZZ/2 )\longrightarrow 0.$$
Since the functor $L_\ZZ^{2^n}$ is $n$-connected, $\pi_{n+k}(L_\ZZ^{2^n}(A_\bullet))=0.$ Therefore $$\pi_{n+1+k}(L_\ZZ^{2^n}(A_\bullet))\otimes \ZZ/2 \cong \pi_{n+1+k}(L^{2^n}_{\ZZ/2}(V_\bullet)).$$ We already proved that $\pi_{n+1+k}(L^{2^n}_{\ZZ/2}(V_\bullet) )\ne 0.$ Hence $\pi_{n+1+k}(L_\ZZ^{2^n}(A_\bullet))\ne 0.$  
\end{proof}

\begin{Remark} The Proposition \ref{prop_L^2^n} can be also deduced from results of D. Leibowitz \cite{Leibowitz} or from unpublished results of R. Mikhailov \cite{Mikhailov}, where he describes all derived functors in the sense of Dold-Puppe of Lie powers in the case $R=\ZZ$.
\end{Remark}

\end{document}